\theoremstyle{plain}
\newtheorem{lema}{\sc Lemma}
\newtheorem{prop}[lema]{\sc Proposition}
\newtheorem{teo}[lema]{\sc Theorem}
\newtheorem{coro}[lema]{\sc Corollary}
\newtheorem*{hall}{\sc Hall's Theorem}
\theoremstyle{definition}
\newtheorem{obs}[lema]{\sc Remark}
\newcommand{\multiline}[1]{%
  \begin{tabularx}{\dimexpr\linewidth-\ALG@thistlm}[t]{@{}X@{}}
    #1
  \end{tabularx}
}    
\begin{document}

\title[A note on collapsibility of acyclic 2-complexes]{A note on collapsibility of acyclic 2-complexes}


\author[N. A. Capitelli]{Nicol\'as A. Capitelli\\
\textit{\scriptsize
U\MakeLowercase{niversidad} N\MakeLowercase{acional de} L\MakeLowercase{uj\'an}, D\MakeLowercase{epartamento de} C\MakeLowercase{iencias} B\MakeLowercase{\'asicas}, A\MakeLowercase{rgentina.}}}

\thanks{\textit{E-mail address:} {\color{blue}ncapitelli@unlu.edu.ar}}

\subjclass[2020]{05E45, 52B05}

\keywords{Discrete Morse Theory, discrete vector fields, collapsibility.}

\thanks{\textit{This research was partially supported by CONICET and the Department of Basic Sciences, UNLu (CDD-CB 148/18).}}

\begin{abstract} We present a Morse-theoretic characterization of collapsibility for $2$-dimensio\-nal acyclic simplicial complexes by means of the values of normalized optimal combinatorial Morse functions.
\end{abstract}

\maketitle


Let $K$ be a finite connected simplicial complex and let $f:K\rightarrow\mathbb{R}$ be a combinatorial Morse function over $K$. Let $\mathcal{Z}_f$ be the set of all combinatorial Morse functions $g:K\rightarrow \mathbb{Z}_{\geq 0}$ equivalent to $f$; i.e. inducing the same gradient field ($\mathcal{Z}_f\neq \emptyset$ by the finiteness of $K$). The \emph{nor\-ma\-li\-za\-tion} of $f$ is the map $h_f:K\rightarrow\mathbb{Z}_{\geq 0}$ defined by $$h_f(\sigma)=\min_{g\in\mathcal{Z}_f}\{g(\sigma)\}.$$ The function $h_f$ 
is also a combinatorial Morse function equivalent to $f$ (see Proposition \ref{Prop:1} below). The purpose of this note is to give a characterization of collapsibility for $2$-di\-men\-sio\-nal acyclic simplicial complexes by means of the values of $h_f$. In what follows, we shall write $\sigma\prec \tau$ whenever $\sigma$ is an immediate face of $\tau$ (i.e. a proper face of maximal dimension).









\begin{prop}\label{Prop:1} The function $h_f$ is a combinatorial Morse function equivalent to $f$.\end{prop}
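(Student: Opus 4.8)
The plan is to prove that $\mathcal{Z}_f$ is closed under pointwise minima of finitely many of its elements, and then to exhibit $h_f$ as one such minimum; note that $\mathcal{Z}_f\neq\emptyset$ is already guaranteed. Throughout I use the following restatement of membership in $\mathcal{Z}_f$: a map $g\colon K\to\mathbb{Z}_{\geq 0}$ lies in $\mathcal{Z}_f$ iff $g$ is a combinatorial Morse function and, for every covering pair $\sigma\prec\tau$, one has $g(\tau)\leq g(\sigma)$ exactly when $(\sigma,\tau)$ belongs to the gradient field $V$ of $f$ --- equivalently, $g(\tau)>g(\sigma)$ exactly when $(\sigma,\tau)\notin V$. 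Since $\mathcal{Z}_f\neq\emptyset$, for each simplex $\sigma$ the set $\{g(\sigma):g\in\mathcal{Z}_f\}$ is a nonempty set of nonnegative integers, so its minimum $h_f(\sigma)$ is attained by some $g_\sigma\in\mathcal{Z}_f$.

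First I would carry out the key step: if $g,g'\in\mathcal{Z}_f$, then $m:=\min\{g,g'\}$ (pointwise) lies in $\mathcal{Z}_f$. Fix a covering pair $\sigma\prec\tau$. If $(\sigma,\tau)\in V$, then $g(\tau)\leq g(\sigma)$ and $g'(\tau)\leq g'(\sigma)$, so $m(\tau)=\min\{g(\tau),g'(\tau)\}\leq\min\{g(\sigma),g'(\sigma)\}=m(\sigma)$. If $(\sigma,\tau)\notin V$, then $g(\sigma)<g(\tau)$ and $g'(\sigma)<g'(\tau)$; taking, say, $g(\sigma)\leq g'(\sigma)$ (the other case being symmetric), we get $m(\sigma)=g(\sigma)<g(\tau)$ and $m(\sigma)=g(\sigma)\leq g'(\sigma)<g'(\tau)$, hence $m(\sigma)<\min\{g(\tau),g'(\tau)\}=m(\tau)$. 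Thus on every covering pair $m$ obeys the same inequality sign relative to $V$ as $g$ does; in particular, for every simplex $\sigma$, $\{\tau\succ\sigma: m(\tau)\leq m(\sigma)\}=\{\tau\succ\sigma: g(\tau)\leq g(\sigma)\}$ and $\{\nu\prec\sigma: m(\nu)\geq m(\sigma)\}=\{\nu\prec\sigma: g(\nu)\geq g(\sigma)\}$, each of cardinality $\leq 1$ because $g$ is a combinatorial Morse function. Hence $m$ is a combinatorial Morse function with values in $\mathbb{Z}_{\geq 0}$ whose gradient field is $V$, i.e. $m\in\mathcal{Z}_f$.

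Next I would conclude. Since $K$ is finite, iterating the key step gives $m^{\ast}:=\min\{g_\sigma:\sigma\in K\}\in\mathcal{Z}_f$. For every simplex $\tau$ we have $m^{\ast}(\tau)\leq g_\tau(\tau)=h_f(\tau)$, while $m^{\ast}(\tau)=\min_{\sigma\in K}g_\sigma(\tau)\geq h_f(\tau)$ because each $g_\sigma\in\mathcal{Z}_f$; therefore $m^{\ast}=h_f$ as maps $K\to\mathbb{Z}_{\geq 0}$. In particular $h_f\in\mathcal{Z}_f$, which says precisely that $h_f$ is a combinatorial Morse function equivalent to $f$.

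I expect the one genuinely delicate point to be the case $(\sigma,\tau)\notin V$ in the key step: the strict inequality $\min\{g(\sigma),g'(\sigma)\}<\min\{g(\tau),g'(\tau)\}$ is exactly where the hypothesis that $g$ and $g'$ induce the \emph{same} gradient field is used, and indeed a pointwise minimum of two combinatorial Morse functions with different gradient fields need not be a combinatorial Morse function at all. The rest is routine bookkeeping with the two defining inequalities.
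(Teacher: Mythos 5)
Your proof is correct, and it takes a genuinely different route from the paper's. The paper invokes the characterization of equivalence from Ayala--Fern\'andez--Vilches (two combinatorial Morse functions are equivalent iff they induce the same strict inequalities on covering pairs) and verifies that criterion for the pair $(f,h_f)$ directly, in each direction choosing a $g\in\mathcal{Z}_f$ that attains the minimum at the relevant simplex; this is about four lines long but leans on the cited theorem to conclude that $h_f$ is itself a Morse function equivalent to $f$. You instead prove a structural fact --- $\mathcal{Z}_f$ is closed under pointwise minima --- and then realize $h_f$ as the pointwise minimum of the finitely many minimizers $g_\sigma$, so that $h_f\in\mathcal{Z}_f$ by construction. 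Your case analysis on covering pairs ($(\sigma,\tau)\in V$ versus $(\sigma,\tau)\notin V$) is airtight, and your final squeeze $h_f\leq m^{\ast}\leq h_f$ is correct. What your approach buys is self-containedness (no appeal to the equivalence criterion beyond its definitional content, since ``same gradient field'' is exactly how the paper defines equivalence) plus the slightly stronger byproduct that $\mathcal{Z}_f$ is a meet-semilattice under pointwise minimum, with $h_f$ as its least element; what the paper's approach buys is brevity. You correctly identify the delicate point: the strict inequality in the $(\sigma,\tau)\notin V$ case is exactly where having the \emph{same} gradient field is used, and without it the pointwise minimum of two Morse functions can fail to be Morse.
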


\begin{proof} It suffices to show that $f(\sigma)<f(\tau)$ if and only if $h_f(\sigma)<h_f(\tau)$ whenever $\sigma\prec\tau$ (see \cite[Theorem 3.1]{Vil}). Suppose $f(\sigma)<f(\tau)$. If $g\in\mathcal{Z}_f$ is such that $h_f(\tau)=g(\tau)$ then in particular $g(\sigma)<g(\tau)$ and hence $$h_f(\sigma)\leq g(\sigma)<g(\tau)=h_f(\tau).$$
If now $h_f(\sigma)<h_f(\tau)$, let $g\in\mathcal{Z}_f$ be such that $h_f(\sigma)=g(\sigma)$. Then $$g(\sigma)=h_f(\sigma)<h_f(\tau)\leq g(\tau).$$ Since $f$ is equivalent to $g$ then $f(\sigma)<f(\tau)$.
\end{proof}

\begin{lema} \label{Lemma:PropertiesOfh} The function $h_f$ satisfies:\begin{enumerate}
\item\label{(1)} $h_f(\sigma)\geq\dim(\sigma)$ for all $\sigma\in K$.
\item\label{(2)} $h_f(\sigma)=0$ if and only if $\sigma$ is a critical vertex for $f$.
\item\label{(3)} If $\sigma\prec\tau$ and $f(\sigma)\geq f(\tau)$ then $h_f(\sigma)=h_f(\tau)$.
\end{enumerate}\end{lema}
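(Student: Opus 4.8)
The plan is to get all three items from Proposition \ref{Prop:1} (so that $h_f$ is itself a combinatorial Morse function and thus satisfies Forman's local conditions), from the defining minimality $h_f(\sigma)=\min_{g\in\mathcal{Z}_f}g(\sigma)$, and from the criterion already used there: a map $g:K\to\mathbb{Z}_{\geq 0}$ lies in $\mathcal{Z}_f$ precisely when $f(\rho)<f(\mu)\iff g(\rho)<g(\mu)$ for every $\rho\prec\mu$. For \eqref{(1)} I would induct on $\dim\sigma$. The case $\dim\sigma=0$ is immediate since $h_f$ is $\mathbb{Z}_{\geq 0}$-valued. If $\dim\sigma=d\geq 1$, then $\sigma$ has $d+1\geq 2$ immediate faces and, $h_f$ being a combinatorial Morse function, at most one of them has $h_f$-value $\geq h_f(\sigma)$; hence some immediate face $\rho$ satisfies $h_f(\rho)<h_f(\sigma)$, so $h_f(\rho)\leq h_f(\sigma)-1$, and by induction $h_f(\sigma)\geq h_f(\rho)+1\geq\dim\rho+1=d$.

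For \eqref{(2)}: if $h_f(\sigma)=0$ then $\dim\sigma=0$ by \eqref{(1)}, so $\sigma$ is a vertex; if it were non-critical it would be matched upward to some edge $e\succ\sigma$ with $f(\sigma)\geq f(e)$, hence $h_f(\sigma)\geq h_f(e)\geq\dim e=1$ by the equivalence criterion and \eqref{(1)}, a contradiction. Conversely, if $\sigma$ is a critical vertex then $f(\sigma)<f(e)$, hence $h_f(\sigma)<h_f(e)$, for every edge $e\succ\sigma$; so the map $g'$ agreeing with $h_f$ except for $g'(\sigma)=0$ still satisfies the criterion (the only pairs to recheck are the $(\sigma,e)$, where $0<h_f(e)$), whence $g'\in\mathcal{Z}_f$ and $h_f(\sigma)\leq g'(\sigma)=0$; with \eqref{(1)} this gives $h_f(\sigma)=0$.

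Part \eqref{(3)} is where the real work lies, and I would again induct on $\dim\sigma$. From $f(\sigma)\geq f(\tau)$ and the criterion we get $h_f(\sigma)\geq h_f(\tau)$ for free, so it suffices to exhibit some $g'\in\mathcal{Z}_f$ with $g'(\sigma)\leq h_f(\tau)$. The candidate is $g'$ equal to $h_f$ off $\sigma$ with $g'(\sigma)=h_f(\tau)$ (note $h_f(\tau)\leq h_f(\sigma)$). Verifying $g'\in\mathcal{Z}_f$: pairs not involving $\sigma$ are untouched; for a coface $\mu\succ\sigma$ with $\mu\neq\tau$ one has $f(\sigma)<f(\mu)$ and $h_f(\tau)\leq h_f(\sigma)<h_f(\mu)$, and for $\mu=\tau$ both sides of the criterion are false. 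The delicate family is the immediate faces $\rho\prec\sigma$: if $f(\rho)\geq f(\sigma)$ then $h_f(\rho)\geq h_f(\sigma)\geq h_f(\tau)$ and the criterion holds; if $f(\rho)<f(\sigma)$ one must prove $h_f(\rho)<h_f(\tau)$. This is the crux, and simpliciality enters here: the facet $\sigma'$ of $\tau$ spanned by $\rho$ together with the vertex of $\tau$ not in $\sigma$ satisfies $\rho\prec\sigma'\prec\tau$ with $\sigma'\neq\sigma$, and $f(\sigma')<f(\tau)$ (since $\sigma$ is the only immediate face of $\tau$ on which $f$ is not strictly smaller), so $h_f(\sigma')<h_f(\tau)$. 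If $f(\rho)<f(\sigma')$ then $h_f(\rho)<h_f(\sigma')<h_f(\tau)$; if $f(\rho)\geq f(\sigma')$ then the inductive hypothesis of \eqref{(3)} applied to $\rho\prec\sigma'$ (strictly smaller bottom dimension) gives $h_f(\rho)=h_f(\sigma')<h_f(\tau)$. Either way $h_f(\rho)<h_f(\tau)$, so $g'\in\mathcal{Z}_f$ and $h_f(\sigma)\leq h_f(\tau)$.

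The main obstacle is precisely this last point of \eqref{(3)}: ruling out that an immediate face of $\sigma$ sits at a value in the interval $[\,h_f(\tau),h_f(\sigma)\,)$. Naively lowering $h_f$ at $\sigma$ by one unit fails, because $h_f$ is already minimal and a blocking face can exist; the way out is the geometric observation above, which forces every such face to descend through the ``parallel'' facet $\sigma'$ of $\tau$ and is exactly what lets the induction close. The remaining points --- the base case $\dim\sigma=0$ (no immediate faces, only the coface checks survive) and the fact that every modified function stays $\mathbb{Z}_{\geq 0}$-valued --- are routine and I would dispatch them quickly.
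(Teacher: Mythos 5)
Your proposal is correct and follows essentially the same route as the paper: induction on $\dim\sigma$, a one-simplex modification of $h_f$ at $\sigma$, and the key observation that an immediate face $\rho\prec\sigma$ also lies in a second facet $\sigma'$ of $\tau$ with $f(\sigma')<f(\tau)$, which forces $h_f(\rho)<h_f(\tau)$. The only cosmetic difference is that the paper phrases item (3) as a minimal-dimensional counterexample whose value it lowers by $1$ to contradict minimality, whereas you set the new value equal to $h_f(\tau)$ directly; the verification is the same.
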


\begin{proof} 
%
%
By definition, $h_f(v)\geq \dim(v)$ for any vertex $v\in K$. Let $\dim(\sigma)\geq 1$. Since in this case $\sigma$ has at least two immediate faces there is a $\nu\prec\sigma$ such that $h_f(\nu)<h_f(\sigma)$ (see, e.g., \cite[Theorem 9.3]{For}). By an inductive argument we conclude that $h_f(\sigma)>h_f(\nu)\geq \dim(\nu)=\dim(\sigma)-1$. This proves Item (1).

Item (2) follows from item (1) and the fact that lowering the value of any critical vertex in a function $g\in \mathcal{Z}_f$ produces again a combinatorial Morse function equivalent to $f$.

To see (3) suppose otherwise and let $\sigma$ be the simplex of minimal dimension satisfying $h_f(\sigma)>h_f(\tau)$. Note that $h_f(\tau)>h_f(\eta)$ for every $\eta\prec\sigma$. Indeed, if $\sigma'$ is the other $\dim(\sigma)$-dimensional simplex containing $\eta$ as an immediate face then, by the choice of $\sigma$, we have $h_f(\eta)\leq h_f(\sigma')< h_f(\tau)$. In particular $$h_f(\sigma)-1\geq h_f(\tau) >h_f(\eta)$$ for every $\eta\prec\sigma$. Therefore, the function $$g(\nu)=\begin{cases}h_f(\nu)&\nu\neq \sigma\\ h_f(\sigma)-1&\nu=\sigma\end{cases}$$ is a combinatorial Morse function equivalent to $f$, thus contradicting the minimality of $h_f$.
\end{proof}

For a given combinatorial Morse function $f:K\rightarrow \mathbb{R}$ consider the number $$\mathfrak{N}(K,f):=\sum_{\sigma\in K}(-1)^{\dim(\sigma)}h_f(\sigma).$$
This definition is motivated by property (3) of Lemma \ref{Lemma:PropertiesOfh}, which in turn implies that the sum may be taken over the critical simplices alone. We have the following result.

\begin{prop}\label{Proposition:collapsible0} If $K$ is collapsible then there exists a combi\-na\-to\-rial Morse function $f:K\rightarrow\mathbb{R}$ such that $\mathfrak{N}(K,f)=0$.\end{prop}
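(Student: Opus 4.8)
The plan is to exploit the standard fact that a collapsible complex carries a combinatorial Morse function with a single critical cell, and then to read off the value of $\mathfrak{N}$ from the three properties of $h_f$ already established in Lemma \ref{Lemma:PropertiesOfh}.

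First I would fix a collapse $K\searrow \{v_0\}$ to a single vertex. Each elementary collapse deletes a free pair $\mu\prec\nu$; collecting these pairs yields a matching on the face poset of $K$ whose unique unmatched cell is $v_0$, and this matching is acyclic because it is induced by a collapsing order. By the correspondence between acyclic matchings and discrete gradient vector fields (see, e.g., \cite[Theorem 9.3]{For} and \cite{Vil}), this matching is the gradient field of some combinatorial Morse function $f\colon K\rightarrow\mathbb{R}$, and the critical simplices of $f$ are exactly its unmatched cells; hence $v_0$ is the only critical simplex of $f$.

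Next I would compute $\mathfrak{N}(K,f)$ using the reduction to critical simplices noted right after Lemma \ref{Lemma:PropertiesOfh}: if $\mu\prec\nu$ is a matched pair of $f$ then $f(\mu)\geq f(\nu)$, so $h_f(\mu)=h_f(\nu)$ by item (3), and since $\dim\nu=\dim\mu+1$ the two contributions $(-1)^{\dim\mu}h_f(\mu)$ and $(-1)^{\dim\nu}h_f(\nu)$ cancel in the alternating sum. Therefore $\mathfrak{N}(K,f)$ reduces to the sum over the critical simplices alone, which in this case is just $(-1)^{0}h_f(v_0)=h_f(v_0)$. Finally, $v_0$ is critical for $f$, so item (2) of Lemma \ref{Lemma:PropertiesOfh} yields $h_f(v_0)=0$, and thus $\mathfrak{N}(K,f)=0$.

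I do not expect a genuine obstacle here: the only external ingredient is the routine translation of a collapse into a combinatorial Morse function with exactly one critical cell, and everything else is bookkeeping with the already-proved properties of $h_f$. (The substantive content of the note lies in the converse direction — recovering collapsibility from the vanishing of $\mathfrak{N}$ in the $2$-dimensional acyclic case — which plays no role in this proposition.)
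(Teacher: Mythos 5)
Your proposal is correct and follows essentially the same route as the paper: obtain from the collapse a combinatorial Morse function whose unique critical simplex is a vertex $v$, reduce $\mathfrak{N}(K,f)$ to the critical simplices via item (3) of Lemma \ref{Lemma:PropertiesOfh}, and conclude $\mathfrak{N}(K,f)=h_f(v)=0$ by item (2). The paper simply cites \cite[Lemma 4.3]{For} for the existence of the one-critical-cell function rather than spelling out the matching construction, and asserts the reduction to critical simplices without the explicit cancellation argument you give.
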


\begin{proof} If $K$ is collapsible then there exists a combinatorial Morse function $f$ over $K$ with only one critical simplex, which must be a vertex $v$ (see e.g. \cite[Lemma 4.3]{For}). Therefore $\mathfrak{N}(K,f)=h_f(v)=0$, the last equality holding by property ($2$) of Lemma \ref{Lemma:PropertiesOfh}.\end{proof}

\noindent In the case of graphs, the other implication also holds.

\begin{prop}\label{Proposition:ThmForGraphs} A connected graph $G$ is collapsible if and only if there exists a combi\-na\-to\-rial Morse function $f:G\rightarrow\mathbb{R}$ such that $\mathfrak{N}(G,f)=0$.\end{prop}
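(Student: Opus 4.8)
The forward implication is already Proposition~\ref{Proposition:collapsible0}, so the substance is the converse: from a combinatorial Morse function $f\colon G\to\mathbb{R}$ with $\mathfrak{N}(G,f)=0$ I must extract a collapse of $G$ to a point. The first step is to carry out the reduction announced right after the definition of $\mathfrak{N}$, namely that the alternating sum only sees the critical cells. Concretely, if $\nu\prec\sigma$ is a matched pair of the gradient field of $f$, then $f(\nu)\ge f(\sigma)$, so item~(3) of Lemma~\ref{Lemma:PropertiesOfh} yields $h_f(\nu)=h_f(\sigma)$; as $\dim\sigma=\dim\nu+1$, this pair contributes $(-1)^{\dim\nu}h_f(\nu)+(-1)^{\dim\sigma}h_f(\sigma)=0$ to $\mathfrak{N}(G,f)$. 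Deleting every matched pair leaves
\[
  \mathfrak{N}(G,f)=\sum_{v\ \text{critical}}h_f(v)-\sum_{e\ \text{critical}}h_f(e),
\]
the first sum over the critical vertices of $f$ and the second over its critical edges.

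Next I would feed in items (1) and (2) of Lemma~\ref{Lemma:PropertiesOfh}. Item~(2) says $h_f(v)=0$ for every critical vertex $v$, so the first sum disappears and $\mathfrak{N}(G,f)=-\sum_{e\ \text{critical}}h_f(e)$. Item~(1) says $h_f(e)\ge\dim(e)=1$ for each edge. Hence the hypothesis $\mathfrak{N}(G,f)=0$ can hold only if $f$ has \emph{no} critical edge.

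Finally I would turn the absence of critical $1$-cells into collapsibility. Writing $c_i$ for the number of critical $i$-cells of $f$, the weak Morse inequality gives $b_1(G)\le c_1=0$; a connected graph with trivial first homology is a tree, and every tree is collapsible. (Equivalently $c_0-c_1=\chi(G)$, so $c_0=1$ and $f$ has a single critical cell, whence $G$ is collapsible by \cite[Lemma~4.3]{For}, exactly as in Proposition~\ref{Proposition:collapsible0}.) I do not anticipate a real obstacle here: the only places calling for care are the sign bookkeeping in the first step — in particular using the convention that a matched pair $\nu\prec\sigma$ satisfies $f(\nu)\ge f(\sigma)$ — and the passage from ``no critical edge'' to acyclicity via the Morse inequality or the Euler characteristic.
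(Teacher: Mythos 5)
Your argument is correct and follows essentially the same route as the paper: reduce $\mathfrak{N}(G,f)$ to the critical cells, kill the vertex sum by item (2), force the edge sum to vanish by item (1), and conclude $G$ is a tree. The only cosmetic difference is that you invoke the weak Morse inequality $b_1(G)\le c_1=0$ where the paper counts critical vertices directly via connectedness; both close the argument equally well.
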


\begin{proof} Let $f$ be a Morse function with $\mathfrak{N}(G,f)=0$. Write
$$0=\sum_{\substack{\text{critical}\\ \text{vertices}}}h_f(v)-\sum_{\substack{\text{critical}\\ \text{edges}}}h_f(e).$$
\noindent By Lemma \ref{Lemma:PropertiesOfh} the first sum is zero and the second sum is positive if there is a critical edge. We conclude that $f$ has no critical edges. Since $G$ is connected there must be only one critical vertex. Hence $G$ is homotopy equivalent to CW with only a $0$-cell and thus it is a tree.\end{proof}

\begin{figure}[h]
  \centering
    \includegraphics[scale=0.6]{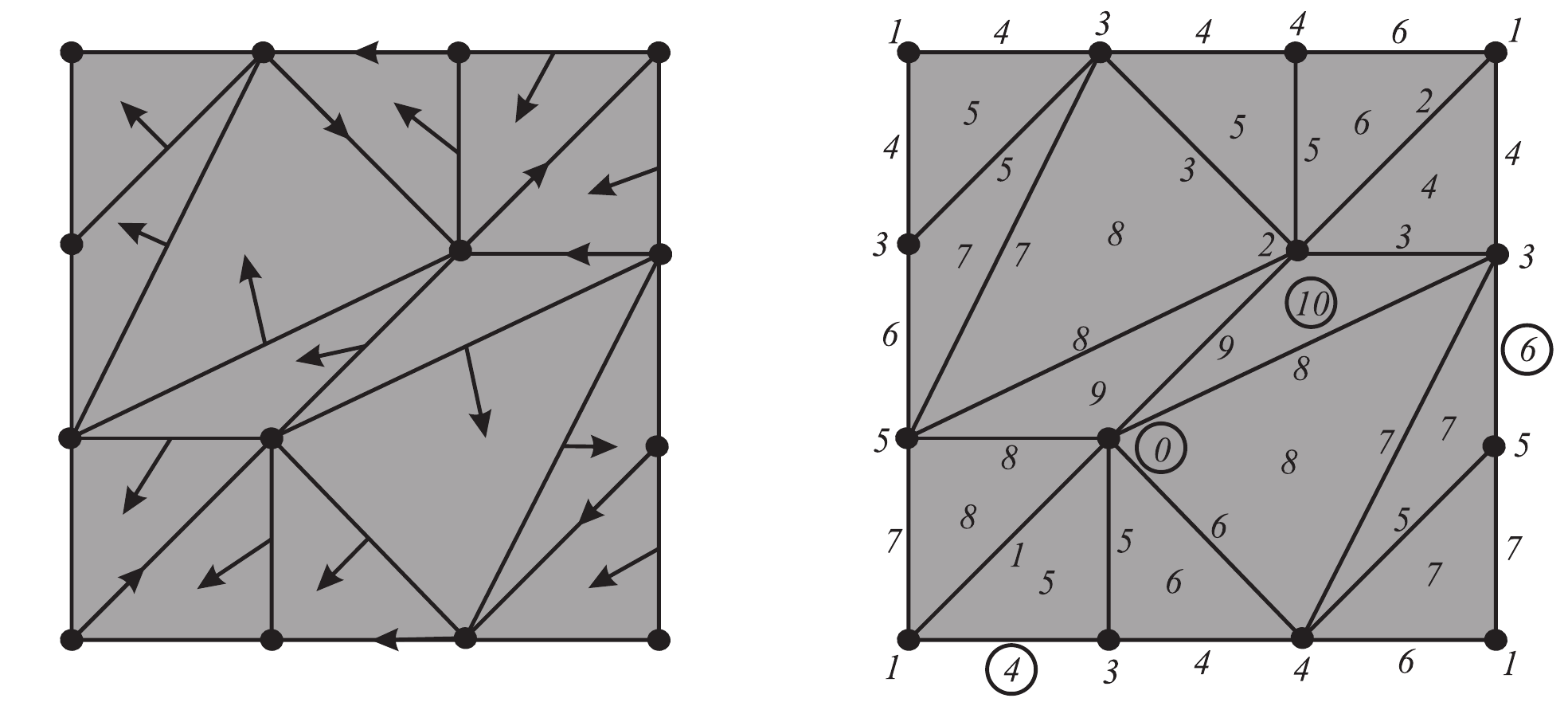}
        \caption{The gradient field (on the left) and the values of $h_f$ (on the right) for an optimal Morse function $f$ over a triangulation of the Torus for which $\mathfrak{N}(T,f)=0$ (the circled values correspond to critical simplices).}
\label{Figura:Unica}
\end{figure}

It is easy to see that Proposition \ref{Proposition:ThmForGraphs} does not hold in this generality for complexes of dimension greater than 1. Note however that the alleged functions appearing in these last two propositions can be taken to be \emph{optimal}; i.e. they have the least possible
number of critical simplices (among all combinatorial Morse functions over that complex). It is therefore natural to associate to a complex $K$ the number $$\mathfrak{N}(K):=\min\{|\mathfrak{N}(K,f)|\,:\, f:K\rightarrow\mathbb{R}\text{ optimal Morse function}\}.$$ With this definition, Proposition \ref{Proposition:collapsible0} may be restated as follows: ``If $K$ is collapsible then $\mathfrak{N}(K)=0$''. The converse of this statement does not hold in dimension greater than $1$ either (see Figure \ref{Figura:Unica}). However, the number $\mathfrak{N}$ can be used to characterize collapsibility for acyclic $2$-complexes. The main result of this note is the following.



\begin{teo}\label{Teo:acyclicintro} Let $K$ be an acyclic $2$-complex. Then, $K$ is collapsible if and only if\linebreak $\mathfrak{N}(K)=0$.\end{teo}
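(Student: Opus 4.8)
The plan is to prove the nontrivial implication, that $\mathfrak{N}(K)=0$ forces $K$ to be collapsible; the converse is Proposition \ref{Proposition:collapsible0}. So fix an optimal combinatorial Morse function $f$ on $K$ with $|\mathfrak{N}(K,f)|=0$, i.e.\ $\mathfrak{N}(K,f)=0$. The first step is to observe that, $K$ being connected, an optimal $f$ has a \emph{single} critical vertex $v$: if $f$ had two or more critical vertices one could modify it so as to drop one critical vertex and one critical edge while keeping the critical $2$-simplices (cancelling a suitable critical edge against a critical vertex along a gradient path; cf.\ \cite{For}), contradicting optimality. By property $(2)$ of Lemma \ref{Lemma:PropertiesOfh} we have $h_f(v)=0$, and by property $(3)$ of Lemma \ref{Lemma:PropertiesOfh} the sum defining $\mathfrak{N}(K,f)$ may be restricted to the critical simplices; hence $\mathfrak{N}(K,f)=0$ reads
$$\sum_{\tau}h_f(\tau)=\sum_{e}h_f(e),\qquad(\star)$$
the sums running respectively over the critical $2$-simplices $\tau$ and the critical edges $e$ of $f$.

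Next I would invoke acyclicity through the Morse chain complex of $f$, which (as $\dim K=2$ and $f$ has one critical vertex) has the form $0\to\mathbb{Z}^{m_2}\xrightarrow{\partial_2}\mathbb{Z}^{m_1}\xrightarrow{\partial_1}\mathbb{Z}\to 0$ and computes $H_*(K)$. Since $K$ is acyclic, $H_0=\mathbb{Z}$ gives $\partial_1=0$, and then $H_1=0$, $H_2=0$ make $\partial_2$ surjective onto $\ker\partial_1=\mathbb{Z}^{m_1}$ and injective; hence $\partial_2$ is an isomorphism, $m_1=m_2=:c$, and $\det\partial_2=\pm1$. If $c=0$ then $f$ has exactly one critical cell and $K$ collapses onto $v$ (\cite{For}), so $K$ is collapsible; assume from now on that $c\geq1$, and we shall derive a contradiction.

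The key lemma to prove is a monotonicity of $h_f$ along the gradient: \emph{if the entry $(\partial_2)_{e,\tau}$ is nonzero, then $h_f(\tau)>h_f(e)$.} A nonzero entry produces a gradient path $\tau=\alpha_0,\beta_0,\alpha_1,\beta_1,\dots,\alpha_r,\beta_r=e$ with $2$-simplices $\alpha_i$ and edges $\beta_i$ such that $\beta_i\prec\alpha_i$ for all $i$ and, for $i<r$, $\alpha_{i+1}$ is the $2$-simplex paired with $\beta_i$ by $f$ (so $\beta_i\prec\alpha_{i+1}$, and $\beta_i\neq\beta_{i-1}$). For every $i$ the face $\beta_i$ is \emph{not} the one paired with $\alpha_i$ (for $i=0$ since $\tau$ is critical, for $i\geq1$ since that face is $\beta_{i-1}\neq\beta_i$), so $f(\beta_i)<f(\alpha_i)$, whence $h_f(\beta_i)<h_f(\alpha_i)$ by Proposition \ref{Prop:1}; and for $i<r$ the pair $\beta_i\prec\alpha_{i+1}$ is matched, so $f(\beta_i)\geq f(\alpha_{i+1})$, whence $h_f(\beta_i)=h_f(\alpha_{i+1})$ by property $(3)$ of Lemma \ref{Lemma:PropertiesOfh}. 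Chaining these, $h_f(\tau)=h_f(\alpha_0)>h_f(\alpha_1)>\cdots>h_f(\alpha_r)>h_f(\beta_r)=h_f(e)$, which proves the lemma.

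To finish, I would expand $\det\partial_2$ as a signed sum over the bijections between the $c$ critical edges $e_1,\dots,e_c$ and the $c$ critical $2$-simplices $\tau_1,\dots,\tau_c$: since $\det\partial_2\neq0$, some bijection $\pi$ has $\prod_i(\partial_2)_{e_i,\tau_{\pi(i)}}\neq0$, so $(\partial_2)_{e_i,\tau_{\pi(i)}}\neq0$ and hence $h_f(\tau_{\pi(i)})>h_f(e_i)$ for all $i$ by the lemma. Summing over $i$ (using $c\geq1$) yields $\sum_{\tau}h_f(\tau)>\sum_{e}h_f(e)$, contradicting $(\star)$; therefore $c=0$ and $K$ is collapsible. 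I expect the main obstacle to be the reduction to a single critical vertex: if $m_1\neq m_2$ then $\partial_2$ is not square, the determinant argument breaks down, and $(\star)$ on its own does not produce a contradiction — so optimality of $f$ is genuinely used. The secondary delicate point is the monotonicity lemma, where the inequality at matched pairs is only $f(\beta_i)\geq f(\alpha_{i+1})$, so property $(3)$ of Lemma \ref{Lemma:PropertiesOfh} — not Proposition \ref{Prop:1} — must be invoked there.
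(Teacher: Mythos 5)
Your proof is correct and reaches the same contradiction as the paper's --- a perfect matching between critical edges and critical $2$-simplices realized by gradient paths, combined with strict monotonicity of $h_f$ along such paths, forces $\sum_{\tau}h_f(\tau)>\sum_{e}h_f(e)$ --- but the two ingredients are obtained by genuinely different means. The paper forms the bipartite graph whose edges record the mere \emph{existence} of a gradient path from a face of a critical $2$-simplex to a critical edge, and extracts the matching from Hall's Theorem, verifying Hall's condition by a rank argument: a violating set $S$ would make $\{\partial_2(\sigma)\}_{\sigma\in S}$ linearly dependent and hence produce a nonzero class in $H_2(L)=0$. You instead work with the (a priori smaller) relation given by the nonzero entries of $\partial_2$ and extract the matching from the Leibniz expansion of $\det\partial_2\neq 0$, which you get from acyclicity making $\partial_2$ an isomorphism --- this simultaneously yields $m_1=m_2$, which the paper obtains from the weak Morse inequalities. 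The two mechanisms are essentially interchangeable (a nonsingular square matrix has a permutation of nonzero entries, a fact itself provable via Hall's Theorem), but yours is self-contained and avoids the graph-theoretic citation. You also supply a full proof of the monotonicity statement $\lambda^{\tau}_{e}\neq 0\Rightarrow h_f(\tau)>h_f(e)$, alternating strict inequalities at unmatched incidences (via Proposition \ref{Prop:1}) with equalities at matched ones (via item (3) of Lemma \ref{Lemma:PropertiesOfh}); the paper asserts this step without argument, so this is a genuine gain in completeness. Two cosmetic remarks: since the paper's Morse complex is taken with $\mathbb{R}$-coefficients you only get $\det\partial_2\neq 0$ rather than $\pm 1$, which is all you need; and your informal cancellation argument for $m_0(f)=1$ can simply be replaced by the citation of \cite[Corollary 11.2]{For} used in the paper.
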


\noindent Before we prove Theorem \ref{Teo:acyclicintro} recall that, given a combinatorial Morse function $f:K\rightarrow\mathbb{R}$, the \emph{Morse complex associated to $f$} is the chain complex of $\mathbb{R}$-vector spaces
$$0\rightarrow\mathfrak{M}_k\stackrel{\partial_k}{\longrightarrow} \mathfrak{M}_{k-1}\stackrel{\partial_{k-1}}{\longrightarrow}\mathfrak{M}_{k-2}\stackrel{\partial_{k-2}}{\longrightarrow}\cdots,$$
where $\mathfrak{M}_k$ is the span of the critical $k$-simplices of $f$. By \cite[Theorem 8.2]{For}, this complex has the same homology with real coefficients as $K$. Also, \cite[Theorem 8.10]{For} shows that the boundary map $\partial_k:\mathfrak{M}_k\rightarrow\mathfrak{M}_{k-1}$ can be written $$\partial_k(\tau)=\sum_{\sigma\in\mathfrak{M}_{k-1}}\lambda^{\tau}_{\sigma}\sigma,$$ where the coefficients $\lambda^{\tau}_{\sigma}$ depend on the set $\Gamma(\tilde{\sigma},\sigma)$ of gradient paths between $\sigma$ and the immediate faces $\tilde{\sigma}$ of $\tau$ (see \cite[\S 8]{For}). In particular, if $\Gamma(\tilde{\sigma},\sigma)=\emptyset$ for every $\tilde{\sigma}\prec\tau$ then $\lambda^{\tau}_{\sigma}=0$.

We also shall make use of the following classical result from Graph Theory (see e.g. \cite{Wes}):

\begin{hall} A bipartite graph $G=(V,E)$ with partition $V=A\cup B$ admits a matching that saturates $A$ if and only if $|N(S)|\geq |S|$ for every $S\subset A$, where $N(S)$ denotes the set of vertices having a neighbor in $S$.\end{hall}

\begin{proof}[Proof of Theorem \ref{Teo:acyclicintro}] Let $L$ be a non-collapsible $2$-complex satisfying the hypotheses of the theorem. We shall show that $\mathfrak{N}(L)>0$. Let $f$ be an optimal combinatorial Morse function over $L$ and let $m_i(f)$ stand for the number of critical $i$-simplices of $f$. On one hand, $m_0(f)=1$ by \cite[Corollary 11.2]{For}. On the other hand, $m_1(f)=m_2(f)\geq 1$ by the \emph{weak Morse inequalities}  and the non-collapsibility of $L$ (see \cite[Corollary 3.7]{For} and \cite[Theorem 3.2]{Koz}). Let $A$ be the set of critical edges of $f$, $B$ the set of critical $2$-simplices of $f$ and form the (balanced) bipartite graph $G=(A\cup B,E)$, where we put an edge between $e\in A$ and $\sigma\in B$ if there exists a gradient path from an immediate face of $\sigma$ to $e$ (see \cite[\S 8]{For}). We claim that $G$ admits a complete matching (i.e. a matching involving every vertex of $G$). If this was not true, there exists by Hall's Theorem a subset $S\subset B$ such that $|S|>|N(S)|$, where $N(S)=\{e\in A\,|\,\{e,\sigma\}\in E\text{ for some $\sigma\in S$}\}$. Write $S=\{\sigma_1,\ldots,\sigma_r\}$. By the above remarks, $\{\partial_2(\sigma_1),\ldots,\partial_2(\sigma_r)\}\subset \mathsf{span}(N(S))$. Since $r>\dim(\mathsf{span}(N(S)))$ we can write $$0=\sum_{j=1}^rb_j\partial_2(\sigma_j),$$ for some $b_i\in\mathbb{R}$, not all zero. But in this case, $\sum_{j=1}^rb_j\sigma_j$ is a generating cycle of $H_2(\mathfrak{M}_{\ast},\partial_{\ast})\simeq H_2(L)$ and we reach a contradiction to our hypotheses. This proves that there exists a complete matching $\mathcal{M}$ in $G$. Order $A=\{e_1,\ldots,e_k\}$ and $B=\{\sigma_1,\ldots,\sigma_k\}$ so that $(e_i,\sigma_i)\in\mathcal{M}$ for every $i=1,\ldots,k$. By construction, there is a gradient path from a boundary edge of $\sigma_i$ to $e_i$ for every $i=1,\ldots,k$. In particular, $h_f(e_i)<h_f(\sigma_i)$ for every $i=1,\ldots,k$. We conclude that $$\mathfrak{N}(L,f)=-\sum_{j=1}^k h_f(e_j)+\sum_{j=1}^k h_f(\sigma_j)=\sum_{j=1}^k(h_f(\sigma_j)-h_f(e_j))>0.$$\end{proof}

\begin{obs}\label{obsgeneral} The hypotheses in the statement of the previous theorem can be slightly relaxed. The same proof can be carried out for connected $2$-complexes fulfilling $\chi(K)=1$ and $H_2(K)=0$. In particular, $\mathfrak{N}(\mathbb{R}P^2)>0$.\end{obs}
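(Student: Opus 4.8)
The plan is to observe that the relaxed hypotheses ``$K$ connected, $\chi(K)=1$ and $H_2(K)=0$'' are, over the reals, equivalent to $\mathbb{R}$-acyclicity, and that the proof of Theorem~\ref{Teo:acyclicintro} only ever exploits the real homology of $K$ together with the Euler characteristic. First I would record that these hypotheses force $K$ to have the real homology of a point: writing $b_i=\dim_{\mathbb{R}} H_i(K;\mathbb{R})$, connectedness gives $b_0=1$, the hypothesis $H_2(K)=0$ gives $b_2=0$, and the identity $\chi(K)=b_0-b_1+b_2$ then yields $b_1=0$. Thus $\tilde H_\ast(K;\mathbb{R})=0$, even though $K$ need not be integrally acyclic (it may carry torsion in $H_1$).

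Next I would re-run the proof of Theorem~\ref{Teo:acyclicintro} and check that nothing beyond these facts is used. The equality $m_0(f)=1$ requires only that $K$ be connected (\cite[Corollary 11.2]{For}). The equality $m_1(f)=m_2(f)$ follows from $m_0(f)=1$ together with $\chi(K)=\sum_i(-1)^i m_i(f)=1$, since $K$ is $2$-dimensional; non-collapsibility then forces the common value to be at least $1$ (a single critical simplex would make $K$ collapsible). None of this uses $H_1$. The construction of the balanced bipartite graph $G=(A\cup B,E)$ and the ensuing Hall argument are unchanged, and the contradiction is reached from the existence of a nonzero class in $H_2(\mathfrak{M}_\ast,\partial_\ast)\simeq H_2(K;\mathbb{R})$, which is ruled out precisely by $H_2(K)=0$ read with real coefficients. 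Since integral acyclicity---in particular the vanishing of $H_1$ with integer coefficients---is never invoked, the identical chain of implications gives $\mathfrak{N}(K)>0$ for every non-collapsible $K$ satisfying the relaxed hypotheses; combined with Proposition~\ref{Proposition:collapsible0}, this upgrades the theorem to the stated generality.

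Finally, to deduce $\mathfrak{N}(\mathbb{R}P^2)>0$, I would fix a triangulation of $\mathbb{R}P^2$ and verify it meets the relaxed hypotheses: it is connected, $\chi(\mathbb{R}P^2)=1$, and $H_2(\mathbb{R}P^2;\mathbb{R})=0$ (indeed $H_2(\mathbb{R}P^2;\mathbb{Z})=0$). On the other hand $\mathbb{R}P^2$ is not contractible---for instance $H_1(\mathbb{R}P^2;\mathbb{Z})=\mathbb{Z}/2\neq 0$---so it is not collapsible, a collapse being a homotopy equivalence. The generalized theorem then yields $\mathfrak{N}(\mathbb{R}P^2)>0$.

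The only delicate point, and the step I would treat most carefully, is the bookkeeping that isolates which coefficient field the argument lives over: the Morse complex $\mathfrak{M}_\ast$ is a complex of \emph{real} vector spaces, so its homology is $H_\ast(K;\mathbb{R})$, and it is this---rather than integral homology---that the matching argument controls. Once this is made explicit it becomes clear that the hypothesis $H_2(K)=0$ is to be read with real coefficients and that $H_1$ plays no role whatsoever, which is exactly what permits examples with torsion such as $\mathbb{R}P^2$.
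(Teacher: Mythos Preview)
Your proposal is correct and follows exactly the approach the paper intends: the remark carries no separate proof beyond the assertion that ``the same proof can be carried out,'' and you have re-run the argument of Theorem~\ref{Teo:acyclicintro} step by step, verifying that only connectedness (for $m_0(f)=1$), the identity $\chi(K)=1$ (for $m_1(f)=m_2(f)$), and $H_2(K;\mathbb{R})=0$ (for the Hall/matching contradiction) are actually used. Your treatment of the $\mathbb{R}P^2$ application is likewise exactly what the paper has in mind.
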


It is straightforward to produce similar results for $PL$-collapsibility. A complex is \emph{$PL$-collapsible} if it has a collapsible subdivision. For a complex $K$ one can define the number $$\widetilde{\mathfrak{N}}(K):=\min\{\mathfrak{N}(L)\,:\, L\text{ is a subdivision of }K\}.$$ As a direct corollary to Theorem \ref{Teo:acyclicintro} we have the following result.

\begin{coro} An acyclic $2$-complex $K$ is $PL$-collapsible  if and only if $\widetilde{\mathfrak{N}}(K)=0$.\end{coro}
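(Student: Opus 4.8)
The plan is to deduce the corollary directly from Theorem~\ref{Teo:acyclicintro} applied to subdivisions, so the main work is bookkeeping about how acyclicity and the invariant $\widetilde{\mathfrak{N}}$ behave under subdivision. First I would observe that any subdivision $L$ of a $2$-complex $K$ is again a $2$-complex, and that subdivision preserves homology (indeed the geometric realization is unchanged), so $L$ is acyclic whenever $K$ is. Thus every $L$ appearing in the definition of $\widetilde{\mathfrak{N}}(K)$ is an acyclic $2$-complex and Theorem~\ref{Teo:acyclicintro} applies to it: $\mathfrak{N}(L)=0$ if and only if $L$ is collapsible.

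Next I would unwind the two directions. Suppose $K$ is $PL$-collapsible; by definition there is a subdivision $L$ of $K$ that is collapsible, hence $\mathfrak{N}(L)=0$ by Theorem~\ref{Teo:acyclicintro}, and therefore $\widetilde{\mathfrak{N}}(K)=\min\{\mathfrak{N}(L')\,:\,L'\text{ a subdivision of }K\}=0$, since $\widetilde{\mathfrak{N}}(K)\geq 0$ always and this particular $L$ realizes the value $0$. Conversely, if $\widetilde{\mathfrak{N}}(K)=0$ then, the set of subdivisions being nonempty and $\mathfrak{N}$ taking values in $\mathbb{Z}_{\geq 0}$, the minimum is attained: there is a subdivision $L$ of $K$ with $\mathfrak{N}(L)=0$. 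By the previous paragraph $L$ is acyclic, so Theorem~\ref{Teo:acyclicintro} gives that $L$ is collapsible, whence $K$ is $PL$-collapsible by definition.

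The one point that needs a word of care is that the minimum defining $\widetilde{\mathfrak{N}}(K)$ is genuinely achieved, rather than merely being an infimum: this is immediate because $\mathfrak{N}(L)\in\mathbb{Z}_{\geq 0}$ for every $L$ and the trivial subdivision $K$ itself is in the indexing set, so $\widetilde{\mathfrak{N}}(K)$ is a well-defined nonnegative integer and equals $\mathfrak{N}(L)$ for some $L$. I do not expect any real obstacle here; the proof is essentially a two-line application of Theorem~\ref{Teo:acyclicintro} once one records that subdivisions of acyclic $2$-complexes are again acyclic $2$-complexes.

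\begin{proof} Let $L$ be any subdivision of $K$. Then $L$ is again a $2$-complex and $|L|=|K|$, so $H_\ast(L)\cong H_\ast(K)$; in particular $L$ is acyclic. Hence Theorem~\ref{Teo:acyclicintro} applies to $L$: it is collapsible if and only if $\mathfrak{N}(L)=0$. Since every $\mathfrak{N}(L)$ is a nonnegative integer and $K$ itself is a subdivision of $K$, the number $\widetilde{\mathfrak{N}}(K)$ is a well-defined nonnegative integer and is attained by some subdivision.

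If $K$ is $PL$-collapsible, pick a collapsible subdivision $L$; then $\mathfrak{N}(L)=0$, so $\widetilde{\mathfrak{N}}(K)=0$. Conversely, if $\widetilde{\mathfrak{N}}(K)=0$, choose a subdivision $L$ of $K$ realizing this minimum, so $\mathfrak{N}(L)=0$. As noted, $L$ is an acyclic $2$-complex, hence collapsible by Theorem~\ref{Teo:acyclicintro}, and therefore $K$ is $PL$-collapsible.\end{proof}
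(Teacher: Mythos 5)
Your proof is correct and is exactly the argument the paper intends: the paper presents this as "a direct corollary to Theorem \ref{Teo:acyclicintro}" with no written proof, and your unwinding of the definitions (subdivisions of acyclic $2$-complexes are acyclic $2$-complexes, the minimum over nonnegative integers is attained, and the trivial subdivision handles well-definedness) is precisely the routine bookkeeping being left to the reader. No issues.
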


\bigskip

\subsection*{\sc Acknowledgements} I would like to thank Gabriel Minian for many useful comments and suggestions.

\end{document}